\newtheorem{theorem}{Theorem} 
\newtheorem{corollary}{Corollary}
\newtheorem{lemma}{Lemma}
\newtheorem*{proof-claim}{Proof}
\theoremstyle{definition}
\newtheorem*{remark}{Remark}
\newtheorem{question}{Question}
\newenvironment{changemargin}[2]{\begin{list}{}{%
\setlength{\topsep}{0pt}%
\setlength{\leftmargin}{0pt}%
\setlength{\rightmargin}{0pt}%
\setlength{\listparindent}{\parindent}%
\setlength{\itemindent}{\parindent}%
\setlength{\parsep}{0pt plus 1pt}%
\addtolength{\leftmargin}{#1}%
\addtolength{\rightmargin}{#2}%
}\item }{\end{list}}
\def\leq{\leqslant}
\def\geq{\geqslant}
\def\P{\mathcal{P}}
\def\ST{\operatorname{ST}}
\begin{document} 

\title{On the number of star-shaped classes in optimal colorings of Kneser graphs}

\author{Hamid Reza Daneshpajouh}
\address{Hamid Reza Daneshpajouh,
University of Nottingham Ningbo China, 199 Taikang E Rd, Yinzhou, Ningbo, Zhejiang, China, 315104}
\email{Hamid-Reza.Daneshpajouh@nottingham.edu.cn}



\begin{abstract}
A family of sets is called star-shaped if all the members of the family have a point in common. The main aim of this paper is to provide a negative answer to the following question
raised by James Aisenberg et al [\textit{Short proofs of the kneser–Lov\'asz coloring principle, Information and Computation, 261:296–310, 2018.}], for the case $k=2$.
\vspace{0.2cm}

\textbf{Question.}
 Do there exist $(n-2k+2)$-colorings of the $(n , k)$-Kneser graphs with more than $k-1$ many non-starshaped
color classes?
\vspace{0.2cm}


\end{abstract}

\keywords{chromatic number; Kneser graph; line graph}

\maketitle 

\section*{Introduction}\label{sec:intro}
In this paper all graphs are finite, undirected and simple.  A (proper) coloring of a graph is an assignment of colors to its vertices such that no two adjacent vertices have the same color. The chromatic number $\chi(H)$ of a graph $H$ is the smallest number of colors in a coloring of $H$. For a given coloring of a graph $H$, a subgraph of $H$ is called colorful if all its vertices receives different colors. The $(n , k)$-Kneser graph $KG(n,k)$ is a graph whose vertices correspond to all $k$-subsets of the set $[n]=\{1,2\ldots, n\}$, and where two vertices are adjacent if and only if the two corresponding sets are disjoint. Finally, a family $\mathcal{F}$ of sets is called star-shaped if $\bigcap\mathcal{F}\neq\emptyset$.

Almost seventy years ago, a purely combinatorial conjecture was raised by Kneser which in the language of graph theory is equivalent to say $\chi(KG(n,k))=n-2k+2$ for $n\geq 2k-1$. About two decades later, Lov\'asz~\cite{Lo78} proved Kneser's conjecture by taking an extraordinary approach, using tools from algebraic topology. Since then, many attentions have been drawn to study various problems related to this conjecture, including a large number of new proofs such as~\cite{daneshpajouh2018new, Do88, greene2002new, matouvsek2004combinatorial}, and many generalizations such as ~\cite{alishahi2015chromatic, alon1986chromatic, frick2017intersection, Me11, daneshpajouh2019chromatic, daneshpajouh2021neighborhood, daneshpajouh2018hedetniemi}. In particular, James Aisenberg et al~\cite{aisenberg2018short} gave a new proof of this conjecture uses a simple counting argument based on the Hilton–Milner theorem, for all but finitely many cases\footnote{Here, we assume that $k$ is a fixed number.}.

Indeed, their main idea was based on that fact that if there was an $(n-2k+1)$-coloring of $KG(n,k)$, then it would contain a star-shaped color class~\cite[Lemma 8]{aisenberg2018short} provided that $n$ is enough large\footnote{For instance $n\geq k^4$ works.}. Then removing this vertex would lead to a  $((n-1)-2k+1)$-coloring of $KG(n-1,k)$. So, to verify the conjecture one just needs to check the validity of the conjecture for finitely many base cases which can be checked, for instance, using the topological arguments provided by Lov\'asz for these cases\footnote{If $k$ is small enough ($k=2$, or $3$), then the base cases can be checked by hand or a computer. However, for $k\geq 4$ we do not know whether there exist a ``purely combinatorial way" to establish the conjecture.}.

Actually, they showed if there was such a coloring then it  would contain many star-shaped color classes~\cite[Lemma 9]{aisenberg2018short}, again provided $n$ is enough large. Motivated by this result, they showed that there is an optimal coloring ( a coloring with $(n-2k+2)$ colors) of $KG(n,k)$ with $(k-1)$ star-shaped color classes provided that $n\geq 3k-3$ and then they raised the following question in this regard.
\begin{question}[\cite{aisenberg2018short}]
Do there exist $(n-2k+2)$-colorings of the $(n , k)$-Kneser graphs with more than $k-1$ many non star-shaped
color classes?
\end{question}
The main objective of this paper is to give a negative answer to the above question for the case $k=2$. As a corollary, this result leads to a purely combinatorial proof for the $\mathcal{K}_{l,m}$-Theorem~\cite[Theorem 2]{simonyi2007colorful} for the case $KG(n,2)$. More precisely,  for every non-negative integers $k, l$ with $k+l+m=n-2$, and any optimal proper coloring of $KG(n,2)$, there is a colorful complete bipartite  $\mathcal{K}_{l, m}$ subgraph in $KG(n,2)$. A similar result for existence of colorful complete tripartite sungraph in $KG(n,2)$ will be provided as as well.


\section{Main Results}\label{sec:prel}
First note that we can view $KG(n,2)$ as the complement of the line graph of the complete graph $\mathcal{K}_n$ and therefore proper colorings of $KG(n,2)$ are exactly partitions of the edge set of the complete graphs $\mathcal{K}_n$ into stars and triangles. More generally, as mentioned here~\cite{daneshpajouh2021colorings}, if $G$ is the complement of the line graph of a graph $H$, then proper colorings of $G$ are exactly partitions of the edge set of $H$ into stars and triangles. A {\em star} is a tree with a vertex, the {\em center} of the star, connected to all other vertices. A single-edge tree is in particular a star. In this particular case, we will always assume that exactly one of the two vertices has been identified as the center, so as to be in a position of always speak of the center of a star without ambiguity. A {\em triangle} is a circuit of length $3$. We call such a partition of $H$ into stars and triangles an {\em $\ST$-partition}. 

\begin{lemma}\label{lem:min-tri}
Consider an optimal $\ST$-partition $\P$ of a graph $H$. Then, at most one vertex of each triangle in $\P$ can be the center of a star.
\end{lemma}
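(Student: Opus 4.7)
The plan is a short exchange argument by contradiction. Suppose some triangle $T$ in $\mathcal{P}$, with vertex set $\{a,b,c\}$, has two vertices that are centers of stars in $\mathcal{P}$; say $a$ is the center of a star $S_a\in\mathcal{P}$ and $b$ is the center of a star $S_b\in\mathcal{P}$. I will rearrange the edges of the three blocks $T$, $S_a$, $S_b$ into just two blocks, producing an $\ST$-partition with strictly fewer parts and contradicting optimality.

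The key observation is that, because $\mathcal{P}$ partitions $E(H)$, the triangle edges $ab$, $ac$, $bc$ do not appear in any other block of $\mathcal{P}$; in particular, $b$ and $c$ are not leaves of $S_a$, and $a$ and $c$ are not leaves of $S_b$. Consequently the sets
\[
S_a' := S_a \cup \{ab,ac\} \qquad\text{and}\qquad S_b' := S_b \cup \{bc\}
\]
are edge-disjoint from the other blocks of $\mathcal{P}$, and each remains a star (centered at $a$ and $b$, respectively) because every newly adjoined edge is incident to the corresponding center.

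Replacing $\{T, S_a, S_b\}$ inside $\mathcal{P}$ by $\{S_a', S_b'\}$ therefore yields an $\ST$-partition of $H$ with exactly one fewer block, contradicting the optimality of $\mathcal{P}$. I do not foresee any substantive obstacle: the only things to check are that $S_a'$ and $S_b'$ are legitimate stars and that the partition property is preserved, both of which are immediate from the fact that the three triangle edges are not shared with any other block. The choice of which two triangle edges to attach to which star is essentially forced — the edge $bc$ cannot be absorbed into a star centered at $a$, which is exactly the phenomenon that forbids a third center from lying in $T$ as well.
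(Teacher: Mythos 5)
Your proposal is correct and is essentially the paper's own argument: both proofs take the two stars centered at the two triangle vertices, absorb the two incident triangle edges into one star and the remaining edge into the other, and delete the triangle to obtain a partition with one fewer block. Your additional verification that the augmented sets remain stars (since the triangle edges cannot already lie in $S_a$ or $S_b$) is a welcome bit of care that the paper leaves implicit.
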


\begin{proof}
We proceed by a contradiction. Suppose that $T$ is a triangle in $\P$ with at least two vertices $x$, $y$ such that each of them is also the center of a star, name it $S_x$, and $S_y$ respectively. Then we can add the two edges of $T$ incident to $x$ to $S_x$, and the remaining edge of $T$ to $S_y$, and finally remove $T$ from $S$. This leads to a new $\ST$-partition with fewer elements which contradicts the optimality of $\P$.
\end{proof}

\begin{lemma}\label{lem:min-tri1}
Consider an optimal $\ST$-partition $\P$ of a graph $H$. If there is a triangle $T$ in $\P$, then each vertex $x$ of $H$ which does not belong to any triangle of $\P$ and it is connected to at least two vertices of $T$ is the center of a star.
\end{lemma}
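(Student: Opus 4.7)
The plan is to argue by contradiction, aiming to apply Lemma~\ref{lem:min-tri} to the triangle $T$. Write $T=\{a,b,c\}$ and, without loss of generality, assume the vertex $x$ is adjacent in $H$ to both $a$ and $b$. Then the edges $xa$ and $xb$ belong to $H$ and hence to some parts of the partition $\P$.

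Next I would classify the possible parts of $\P$ that contain these two edges. Since $x$ is assumed not to lie in any triangle of $\P$, neither $xa$ nor $xb$ can be an edge of a triangle in $\P$: such a triangle would have $x$ as one of its vertices. Therefore both $xa$ and $xb$ lie in stars of $\P$. Now suppose, for contradiction, that $x$ is \emph{not} the center of any star in $\P$. In a star, each edge has exactly one endpoint equal to the center, so the star containing $xa$ must have $a$ as its center, and likewise the star containing $xb$ must have $b$ as its center.

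This would mean that both $a$ and $b$ are centers of stars in $\P$, while at the same time they are two distinct vertices of the triangle $T\in\P$. That directly contradicts Lemma~\ref{lem:min-tri}, which forbids two vertices of the same triangle of $\P$ from being centers of stars in $\P$. Hence the assumption that $x$ is not a center fails, and $x$ must be the center of some star in $\P$.

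I do not anticipate a serious obstacle here: the argument is essentially a one-step reduction to the preceding lemma, and the only thing to be careful about is ruling out the possibility that $xa$ or $xb$ sits inside a triangle of $\P$, which is immediate from the hypothesis that $x$ is in no triangle of $\P$. No case analysis on whether $x$ is adjacent to the third vertex $c$ of $T$ is needed, since the two adjacencies to $a$ and $b$ already suffice to produce two distinct star-centers in $T$.
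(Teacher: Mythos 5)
Your argument is correct and is essentially the paper's own proof: assume $x$ is not a center, note that the edges $xa$ and $xb$ cannot lie in triangles of $\P$ (since $x$ is in no triangle of $\P$), conclude that $a$ and $b$ must both be star centers, and contradict Lemma~\ref{lem:min-tri}. No gaps; the write-up is if anything slightly more explicit than the paper's.
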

\begin{proof}
We will use a proof by contradiction. Suppose $x$ is not the center of any star. If $a, b$ are two vertices of $T$ which are connected to $x$, then each of the edges $\{a,x\}$ and $\{b,x\}$ must belong to a star or a triangle in $\P$. These imply that $a$ and $b$ are both must be the center of stars, as no triangle in $\P$ having $x$ as its vertices, which contradicts the previous lemma.  
\end{proof}

\begin{theorem}\label{The:main}
There is no optimal ST-partition of the complete graph $\mathcal{K}_n$ with more than one triangle. Moreover, if $n\geq 3$, any optimal ST-partition of $\mathcal{K}_n$ contains exactly one triangle.
\end{theorem}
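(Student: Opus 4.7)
The plan is to fix an optimal $\ST$-partition $\P$ of $\mathcal{K}_n$ and pin down its structure using the two preceding lemmas together with the identity $\chi(KG(n,2)) = n-2$. If $n \leq 2$ then $\mathcal{K}_n$ contains no triangle and both assertions are trivial, so I will assume $n \geq 3$, in which case $|\P| = n-2$. Write $s$ for the number of stars and $t$ for the number of triangles in $\P$, so $s + t = n-2$, and let $C \subseteq V(\mathcal{K}_n)$ denote the set of star centers; then $|C| = s$. As a preliminary I dispose of $t = 0$: otherwise every edge lies in a star and hence has an endpoint in $C$, making $C$ a vertex cover of $\mathcal{K}_n$; this would force $|C| \geq n-1$, contradicting $s = n-2$. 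So $t \geq 1$.

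The core of the proof is $t \leq 1$. Suppose for contradiction that $t \geq 2$. Since $\mathcal{K}_n$ is complete, any vertex outside $\bigcup_{i=1}^{t} V(T_i)$ is adjacent to all three vertices of any fixed triangle of $\P$, so Lemma \ref{lem:min-tri1} forces it to lie in $C$; hence
\[
V \setminus C \;\subseteq\; \bigcup_{i=1}^{t} V(T_i).
\]
By Lemma \ref{lem:min-tri}, each triangle has at most one vertex in $C$, so at least two of its vertices lie in $V \setminus C$. Writing $t_i$ for the number of triangles with exactly $i$ vertices in $V \setminus C$ ($i \in \{2,3\}$), this gives $t_2 + t_3 = t$. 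Moreover $|V \setminus C| = n - s = t + 2$, and every edge with both endpoints in $V \setminus C$ cannot lie in a star (neither endpoint is a center) and hence lies in some triangle; double-counting the edges of $\mathcal{K}_n[V \setminus C]$ by their containing triangle yields
\[
\binom{t+2}{2} \;=\; t_2 + 3\,t_3.
\]
Solving these two linear relations gives $t_2 = -(t-1)(t-2)/4$, which is non-negative only for $t \in \{1, 2\}$. The standing hypothesis $t \geq 2$ then forces $t = 2$, $t_2 = 0$, $t_3 = 2$, meaning both triangles are contained in the $4$-element set $V \setminus C$. But two edge-disjoint triangles share at most one vertex (two common vertices would force a shared edge), so $|V(T_1) \cup V(T_2)| \geq 5 > 4$, a contradiction.

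The only delicate point is the containment $V \setminus C \subseteq \bigcup_i V(T_i)$, which relies on Lemma \ref{lem:min-tri1}; I expect no real difficulty there, because in the complete graph $\mathcal{K}_n$ the adjacency hypothesis of that lemma is automatic. After this reduction, the rest of the argument is just a two-by-two linear system together with a trivial vertex-overlap count, so I expect the proof to be quite short.
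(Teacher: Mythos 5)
Your proof is correct, and it takes a genuinely different route from the paper. The paper argues by minimal counterexample on $n$: it first uses Lemmas~\ref{lem:min-tri} and~\ref{lem:min-tri1} together with the minimality of $n$ to show every vertex lies in a triangle, and then runs an explicit case analysis (two vertex-disjoint triangles, two triangles sharing a vertex, three or more triangles), each case killed by a separate ad hoc count. You instead work directly with a single optimal partition: after disposing of $t=0$ via the vertex-cover observation, you use the same two lemmas only to establish that the non-centers $V\setminus C$ have size $t+2$ and that every edge induced on $V\setminus C$ lies in a triangle, and then the double count $\binom{t+2}{2}=t_2+3t_3$ together with $t_2+t_3=t$ forces $t_2=-(t-1)(t-2)/4\geq 0$, i.e.\ $t\leq 2$, with $t=2$ excluded by the elementary fact that edge-disjoint triangles share at most one vertex. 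This eliminates both the induction and the case analysis (including the figures and the somewhat delicate ``at least three triangles'' case), at the cost of nothing: both arguments rely on the same two lemmas and on the known value $|\P|=n-2$ for $n\geq 3$. One small point you should make explicit: your identity $|C|=s$ (and hence $|V\setminus C|=t+2$) uses the fact that in an \emph{optimal} $\ST$-partition no two stars share a center, since two stars with a common center could be merged into one, reducing $|\P|$; this is a one-line remark but the exact equality is what feeds the binomial count.
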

\begin{proof}

Suppose the contrary and choose the minimum integer $n$ with the property that there is an optimal ST-partition $\P$ of the complete graph $\mathcal{K}_n$ with more than one triangle. Let $T_1, \ldots, T_k$ be the list of all the triangles in $\P$ where $k\geq 2$. Let $A=\bigcup_{i=1}^{k}V(T_i)$ be the set of the vertices of these triangles, $B\subseteq [n]\setminus A$ be the set of the rest vertices which are centers of stars, and finally put $C=[n]\setminus (A\cup B)$. By Lemma~\ref{lem:min-tri1}, we have $C=\emptyset$. Moreover, Lemma~\ref{lem:min-tri} implies $B=\emptyset$. Indeed, if there was a vertex of $\mathcal{K}_n$ in $B$, then removing this vertex leads to an optimal ST-partition with at least two triangles for $\mathcal{K}_{n-1}$, which would contradict the minimality of $n$, and therefore we must have $B=\emptyset$. In summery, we have shown that any vertex of $\mathcal{K}_n$ belongs to at least one of these triangles $T_1, \ldots, T_k$. Now, we consider two cases:

\vspace{0.5cm}
\textbf{Case I.}
First, we consider the case that there are exactly two triangles in $\P$, say $T_1$, and $T_2$. We have two possibilities.

\begin{enumerate}
    \item  $T_1$, and $T_2$ are vertex disjoint
    
\begin{center}
    \includegraphics[scale=0.5]{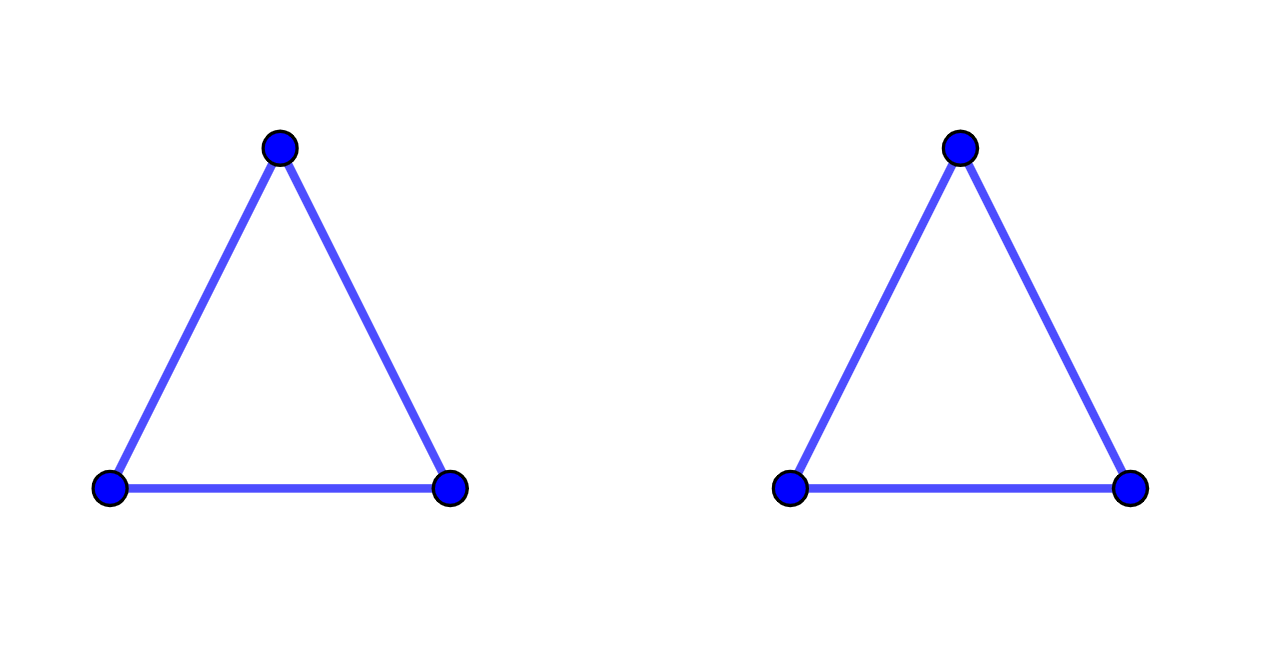}
\end{center}
    
    Note that in this case we have $n=6$ and therefore $|\P|=4$ (since $\P$ is an optimal ST-partition of $\mathcal{K}_6$). But on the other hand,
    we have
$$|P|\geq  2 + 3= 5$$
as $T_1, T_2\in P$ and at least three vertices of $T_1\cup T_2$ are centers. To see the latter claim, consider the edges $e_1, e_2, e_3$ of $\mathcal{K}_6$ which are shown in below 
\begin{center}
    \includegraphics[scale=0.5]{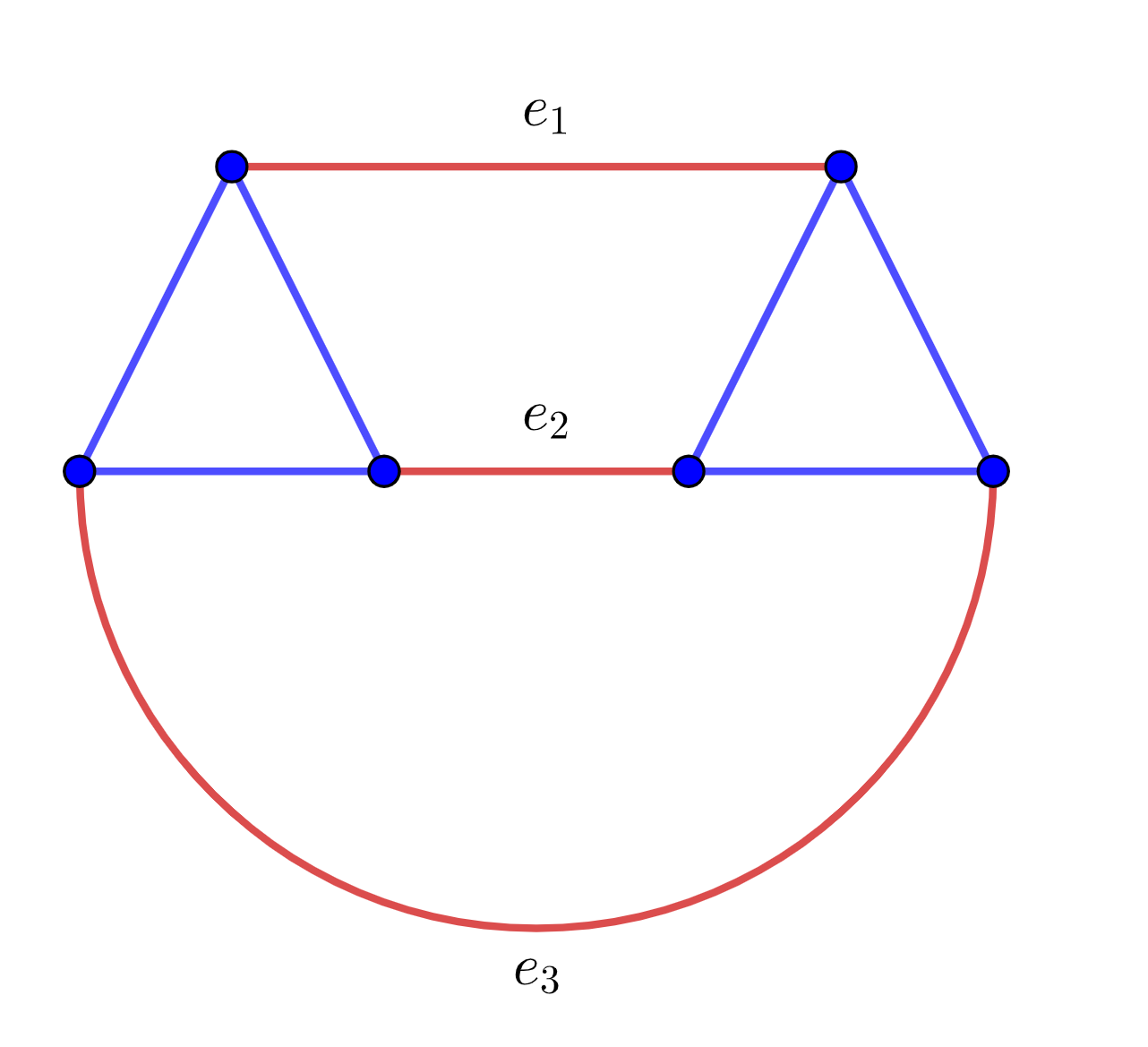}
\end{center}
Each of these edges must belong to a triangle or a start in $\P$. But none of them can belong to a triangle as there are no other triangle in $\P$ except $T_1$ and $T_2$. So, each of them must belong to a star, and hence one of the endpoints of each of $e_1, e_2,$ and $e_3$ must be the center of a star. So, there are at least three stars in $\P$ as these edges are pairwise disjoint. Hence $\P$ must have at least $5$ elements which contradicts the optimality of $\P$.
\vspace{0.3cm}
\item 
If $T_1$, and $T_2$ have a vertex in common,

\begin{center}
    \includegraphics[scale=0.5]{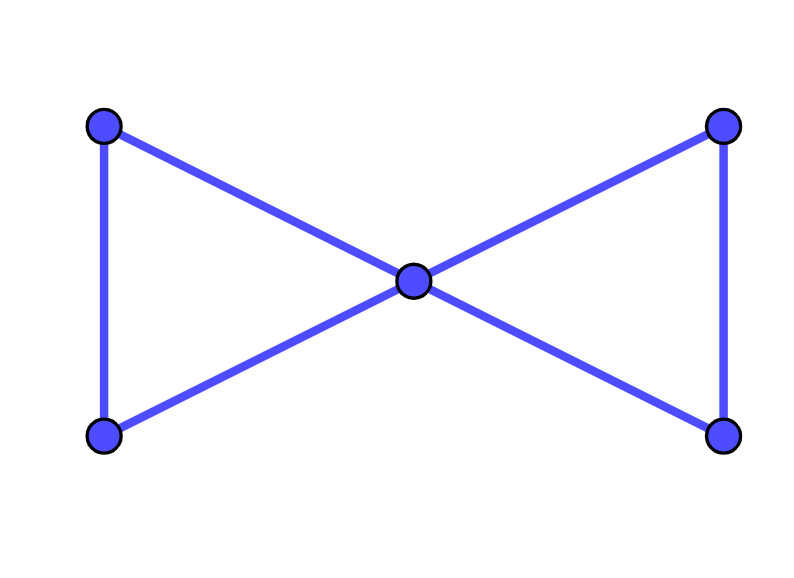}
\end{center}

then we have
$$|P|\geq  2 + 2= 4,$$
as $T_1, T_2\in P$, and at least two vertices of $T_1\cup T_2$ are centers. Again to see why the latter claim is true, it is enough to consider the edges $e_1$ and $e_2$ which are depicted below 
\begin{center}
    \includegraphics[scale=0.5]{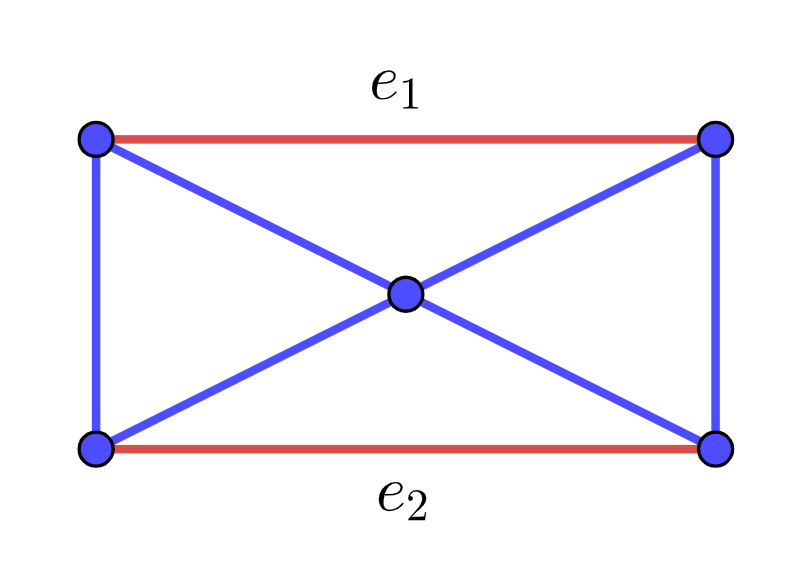}
\end{center}
and repeat the same argument. So, the size of $\P$ is at least four. But,  this is again impossible as $P$ is an optimal ST-partition of $\mathcal{K}_5$ which implies $|\P|=3$.
\end{enumerate}

\vspace{.5cm}
\textbf{Case II.} 
Finally, suppose there are at least 3 triangles in $\P$. Then, this implies that no vertex of a triangle is a center of a star. Indeed, if there was a triangle $T$ in $\P$ which one of its vertices was a center of a star, then all the other triangles in $\P$ except at most one of them is attached to this triangle at $x$ (otherwise removing this vertex would lead to an optimal ST-partition for $\mathcal{K}_{n-1}$ with at least two triangles which would contradict the minimality of $n$) as shown in below
\begin{center}
    \includegraphics[scale=0.35]{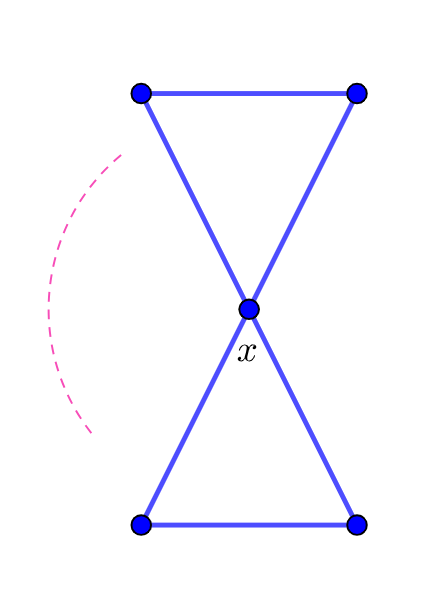}
\end{center}
Now, consider the edges $e_1$ and $e_2$ of $\mathcal{K}_n$ as shown below.

\begin{center}
    \includegraphics[scale=0.35]{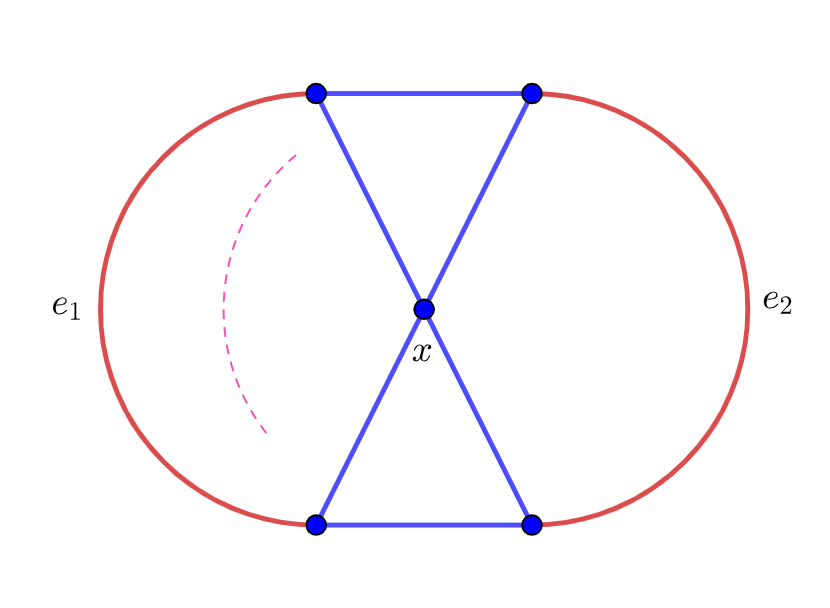}
\end{center}

At most one of these edges can be covered with that only one possible triangle $\P$ which is not attached to $x$. So, at least one of the end points of $e_1$ or $e_2$ must be the center of a star. But then, we would have a triangle with two of its vertices ($x$ and of the endpoints of $e_1$ or $x$ and of the endpoints of $e_2$ ) are the center of stars which would contradict Lemma~\ref{lem:min-tri}. So, all member of $\P$ are triangles. Thus, we must have
$$3(n-2)=\binom{n}{2},$$
as $\P$ is an optimal ST-partition of edges of $\mathcal{K}_n$. This equation just has two solutions $n=3$ or $4$, but on the other hand having at least 3 triangles in $\P$ implies $n\geq 6 $ which is again a contradiction.

So, we have shown that any optimal ST-partition of $\mathcal{K}_n$ contains at most one triangle. Moreover, we show that every optimal ST-partition of $\mathcal{K}_n$ contains exactly one triangle provided that $n\geq 3$. Indeed, the size of any ST-partition $\P$ of $\mathcal{K}_n$ without a triangle is at least $n-1$ as at least one of the endpoints of each edge must be the center of a star. Thus, any optimal ST-partition $\P$ of $\mathcal{K}_n$ where $n\geq 3$ has exactly one triangle (as $|\P|=n-2$ for $n\geq 3$).


\end{proof}

\begin{corollary}
Let $n\geq 3$ be a natural number. For every non-negative integers $l$ and $m$ with $l+m=n-2$, and any optimal proper coloring of $KG(n,2)$, there is a colorful complete tripartite  $\mathcal{K}_{l, m}$ subgraph in $KG(n,2)$.  
\end{corollary}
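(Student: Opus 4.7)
The plan is to leverage Theorem~\ref{The:main} to pin down the precise structure of any optimal $\ST$-partition of $\mathcal{K}_n$, and then build the desired colorful subgraph by hand. The first step is to refine Theorem~\ref{The:main} by showing that in an optimal $\ST$-partition the unique triangle $T=\{u_1,u_2,u_3\}$ contains no star center. Lemma~\ref{lem:min-tri} already implies that at most one of $u_1,u_2,u_3$ can be a center; and if, say, $u_1$ were one, then for every $v\notin T$ the edge $\{u_2,v\}$ would have to lie in a star centered at $v$ (since $u_2$ cannot be a center), forcing every vertex outside $T$ to be a star center as well. Together with $u_1$ this would yield $1+(n-3)=n-2$ stars plus the triangle, i.e.\ $n-1$ color classes, contradicting optimality. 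Consequently the $n-3$ star centers are exactly the vertices $v_1,\ldots,v_{n-3}$ outside $T$, and for each $i$ and each $j\in\{1,2,3\}$ the edge $\{v_i,u_j\}$ is colored by the star $S_i$ centered at $v_i$.

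Next I exhibit the $\mathcal{K}_{l,m}$ explicitly. Assume $l\geq 1$; the case $l=0$ reduces to picking one vertex from each color class, which is immediate. Take
\[
U_L=\{u_1,u_2,v_1,\ldots,v_{l-1}\},\qquad U_R=\{u_3,v_l,\ldots,v_{n-3}\},
\]
which are disjoint subsets of $[n]$. Let $L$ consist of the $l$ vertices $\{u_1,u_2\}$ together with $\{v_i,u_1\}$ for $i=1,\ldots,l-1$, and let $R$ consist of the $m=n-2-l$ vertices $\{v_i,u_3\}$ for $i=l,\ldots,n-3$ (with $R=\emptyset$ when $m=0$). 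Every member of $L$ is a $2$-subset of $U_L$ and every member of $R$ a $2$-subset of $U_R$, so each vertex of $L$ is adjacent in $KG(n,2)$ to each vertex of $R$; this yields a $\mathcal{K}_{l,m}$ subgraph. The colors used are the triangle color (on $\{u_1,u_2\}$) together with $S_1,\ldots,S_{n-3}$ (on the vertices of the form $\{v_i,u_1\}$ or $\{v_i,u_3\}$), all distinct, so the subgraph is colorful.

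The only conceptual obstacle is the structural claim in the first paragraph; once the star centers are identified with the non-triangle vertices, everything else is a transparent construction. No further topological or algebraic input is needed beyond Theorem~\ref{The:main}, which has already forced the optimal partition into the rigid ``one triangle plus peripheral stars'' shape that makes the $\mathcal{K}_{l,m}$ construction essentially a one-step assignment.
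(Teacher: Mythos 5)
Your proposal is correct and follows essentially the same route as the paper: invoke Theorem~\ref{The:main} to get a unique triangle, argue that none of its vertices is a star center (so the centers are exactly the $n-3$ outside vertices), and then explicitly assemble the colorful $\mathcal{K}_{l,m}$ from one triangle edge plus star edges hanging off two distinct triangle vertices. The only difference is cosmetic: you justify the ``no center on the triangle'' claim by directly counting $n-1$ color classes, whereas the paper deletes the offending vertex and appeals to the minimality argument, but both yield the same structure and the same construction.
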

\begin{proof}
The assertion is obviously true for $n=3$. So, without loss of generality we can assume $n\geq 4$. Let $\P$ be an arbitrary optimal ST-partition of $\mathcal{K}_n$. By Theorem~\ref{The:main}, $\P$ contains exactly one triangle, say $T$ with vertex set $\{1,2,3\}$. Moreover, no vertex of this triangle is a center of a star. Since otherwise, we could delete this vertex and find a minimal ST-partition for $\mathcal{K}_{n-1}$ without any triangle. Then, this would imply that $n-1\leq 2$, which contradicts our assumption. Now, $\mathcal{K}_{A, B}$ is the desired subgraph in $KG(n,2)$ where 
\begin{align*}
 & A=\{\{1, i\}: 4\leq i\leq l+3\}\\
 & B=\{\{2, j\}: l+4\leq j\leq n\}\cup\{\{2,3\}\}
\end{align*}
as each $i\neq 1, 2, 3$ is a center of a star and no vertex of $T$ is a center.  
\end{proof}
Above proof reveals that we can have a similar result about the existence of colorful complete tripartite subgraph in $KG(n,2)$ as well.
\begin{corollary}
Let $n\geq 6$ be a natural number. For every integers $k, l, m\geq 1$ with $k+l+m=n-3$, and any optimal proper coloring of $KG(n,2)$, there is a colorful complete tripartite  $\mathcal{K}_{k,l, m}$ subgraph in $KG(n,2)$.  
\end{corollary}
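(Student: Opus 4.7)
The plan is to mimic the bipartite construction in the preceding corollary, promoting the two ``pivot'' vertices $1, 2$ to the three vertices $1, 2, 3$ of the unique triangle guaranteed by Theorem~\ref{The:main}. First I would fix an arbitrary optimal ST-partition $\P$ of $\mathcal{K}_n$ and apply Theorem~\ref{The:main} to extract its unique triangle $T$; after relabeling, I can assume $V(T) = \{1,2,3\}$. Then I would rerun the short argument from the previous corollary to show that none of $1, 2, 3$ is the center of a star in $\P$: deleting such a vertex yields an ST-partition of $\mathcal{K}_{n-1}$ of size $(n-1) - 2$ with no triangle (the two surviving edges of $T$ become a single-edge star), contradicting Theorem~\ref{The:main} applied to $\mathcal{K}_{n-1}$, which is legitimate because $n - 1 \geq 5 \geq 3$. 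Consequently the star centers of $\P$ are precisely the $n-3$ elements of $\{4, 5, \ldots, n\}$.

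Since $k + l + m = n - 3 = |\{4, \ldots, n\}|$ and each of $k, l, m$ is at least $1$, I would partition $\{4, \ldots, n\}$ into three non-empty subsets $I_A, I_B, I_C$ of sizes $k, l, m$ respectively, and set
\[
A = \{\{1, i\} : i \in I_A\}, \quad B = \{\{2, j\} : j \in I_B\}, \quad C = \{\{3, p\} : p \in I_C\}.
\]
I would then verify three things. First, each of $A, B, C$ is an intersecting family (through $1$, $2$, $3$ respectively), so each is independent in $KG(n,2)$. Second, for distinct parts, e.g.\ $\{1,i\} \in A$ and $\{2,j\} \in B$, the two sets are disjoint because $i, j \in \{4, \ldots, n\}$ with $i \neq j$ and $1 \neq 2$; the same check works for the $A$--$C$ and $B$--$C$ pairs, so $A \cup B \cup C$ induces a complete tripartite $\mathcal{K}_{k,l,m}$. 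Third, the color of $\{a, x\}$ with $a \in \{1,2,3\}$ and $x \in \{4, \ldots, n\}$ is the star centered at $x$ (not at $a$, since no vertex of $T$ is a star center), and because $I_A, I_B, I_C$ are pairwise disjoint, all $k + l + m$ chosen vertices receive pairwise distinct colors.

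I do not foresee any genuine obstacle: the heavy lifting is already done by Theorem~\ref{The:main}, and everything after that is an explicit construction. The one subtle point worth flagging in writing is that the triangle color class of $\P$ is \emph{not} used in the subgraph at all; this is harmless because ``colorful'' requires only pairwise distinct colors among the chosen vertices, not that every color class be represented. The counting $k + l + m = n - 3$ exactly matches the number of available star centers, and the hypothesis $n \geq 6$ is just what is needed to accommodate three non-empty parts.
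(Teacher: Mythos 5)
Your proof is correct and follows essentially the same route as the paper: invoke Theorem~\ref{The:main} to get the unique triangle $\{1,2,3\}$, use the vertex-deletion argument to show none of $1,2,3$ is a star center (hence every $i\geq 4$ is), and build the three parts by pairing each triangle vertex with a block of star centers. (One trivial slip: deleting a triangle vertex leaves \emph{one} surviving edge of $T$, not two; the conclusion is unaffected.)
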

\begin{proof}
Again, by Theorem~\ref{The:main}, $\P$ contains exactly one triangle, say $T$ with vertex set $\{1,2,3\}$ and also as mentioned before no vertex of this triangle can be a center of a star. Now, $\mathcal{K}_{A, B, C}$ is the desired subgraph in $KG(n,2)$ where 
\begin{align*}
 & A=\{\{1, i\}: 4\leq i\leq k+3\}\\
 & B=\{\{2, i\}: k+4\leq i\leq k+l+3\}\\
 & C=\{\{3, i\}: k+l+4\leq i\leq n\}
\end{align*}
as each $i\neq 1, 2, 3$ is a center of a star and no vertex of $T$ is a center.  
\end{proof}
\begin{remark}
Note that in the previous corollary the assumption $k+l+m=n-3$ cannot be replaced with $k+l+m=n-2$. To see this, consider the following coloring of $KG(n,2)$
\begin{equation*}
c(\{i < j\})=\begin{cases}
          3 \quad &\text{if} \, j\leq 3 \\
          j \quad &\text{if} \,  i\leq 3\, \&\, j\geq 4   \\
          i \quad &\text{if} \,  i\geq 4\, \\
     \end{cases}
\end{equation*}
We can see this coloring as a coloring of edges of the complete graph $\mathcal{K}_n$. With having this interpretation in mind, one can check that there is no colorful cycle in $\mathcal{K}_n$, i.e., a cycle whose all edges receives different colors. Now, if there was a colorful complete tripartite subgraph $\mathcal{K}_{A,B,C}$ in $KG(n,2)$ where $|A|=k, |B|=l, |C|=m$ with $k,l,m\geq 1$ and $m+l+k= n-2$ , then 
, as none of $A,B$, and $C$ form a cycle in $K_n$, we must have $$|\bigcup A|\geq k+1, |\bigcup B|\geq l+1, |\bigcup C|\geq m+1.$$ Now, since the sets $\bigcup A, \bigcup B , \bigcup C$ are pairwise disjoint, we must have $(k+1)+(l+1)+(m+1)\leq n$
which implies $m+l+k\leq n-3$. Then, this would contradict our assumption that $m+l+k= n-2$.
\end{remark}

\subsection*{Acknowledgments} 
The idea of this paper was inspired while the author was at Universit\'e Paris Est as a postdoctoral researcher under the supervision of Fr\'ed\'eric Meunier. The author would like to express his deepest gratitude to him for his invaluable advice, continuous support, his generous hospitality, and his valuable comments on the first draft of this paper. Also a part of this work was done when the author was at the School of Mathematics of IPM as a guest researcher. Special thanks go to IPM as in that period of time the research of the author was supported by a grant from IPM. 

\bibliographystyle{plain}
\bibliography{main}

\end{document}